\definecolor{myblue}{RGB}{0, 0, 139}
\newtheorem{proposition}{Proposition}
\newtheorem{theorem}{Theorem} 
\newtheorem{lemma}{Lemma}
\newtheorem{corollary}{Corollary}
\newcommand{\norm}[1]{\left\lVert#1\right\rVert}
\newcommand{\W}{\mathcal{W}}
\newcommand{\C}{\mathcal{C}}
\newcommand{\e}{\epsilon}
\newcommand{\w}{\omega}
\renewcommand{\l}{\ell}
\renewcommand{\P}{\mathbb{P}}
\newcommand{\R}{\mathbb{R}}
\let\temp\phi
\let\phi\varphi
\let\varphi\temp
\title{Exact Convergence Analysis for Metropolis-Hastings Independence Samplers in Wasserstein Distances}
\author{Austin Brown\thanks{brow5079@umn.edu} }    
\author{Galin L. Jones\thanks{galin@umn.edu}}    
\affil{School of Statistics, University of Minnesota, Minneapolis, MN, USA}
\begin{document}

\maketitle

\begin{abstract}
Under mild assumptions, we show the exact convergence rate in total variation is also exact in weaker Wasserstein distances for the Metropolis-Hastings independence sampler. 
We develop a new upper and lower bound on the worst-case Wasserstein distance when initialized from points.
For an arbitrary point initialization, we show the convergence rate is the same and matches the convergence rate in total variation.
We derive exact convergence expressions for more general Wasserstein distances when initialization is at a specific point. 

Using optimization, we construct a novel centered independent proposal to develop exact convergence rates in Bayesian quantile regression and many generalized linear model settings.
We show the exact convergence rate can be upper bounded in Bayesian binary response regression (e.g. logistic and probit) when the sample size and dimension grow together.
\end{abstract}

\section{Introduction}
\label{section:1}

Applications in modern statistics often require generating Monte Carlo samples from a distribution defined on $\Theta \subseteq \R^d$ using a version of the Metropolis-Hastings algorithm \citep{Brooks2011, Hastings1970, Metropolis1953}. 
Popular versions of Metropolis-Hastings include, among many others, random walk
Metropolis-Hastings, Metropolis-adjusted Langevin algorithm,
Metropolis-Hastings independence (MHI) sampler, and Hamiltonian Monte Carlo.

Convergence analyses of general state space Metropolis-Hastings algorithms have traditionally focused on studying their convergence rates in total variation distances \citep{Meyn2009, Rosenthal1995, Tierney1994}. These convergence rates have received significant attention, at least in part, because they provide a key sufficient condition for the existence of central limit theorems \citep{Jones2004} and the validity of methods for assessing the reliability of the simulation effort \citep{robe:etal:viz:2021, Vats2019}. However, convergence analyses of Metropolis-Hastings Markov chains typically result in {\em qualitative} convergence rates \citep{Hairer2014, Jarner2000, Johnson2012, Mengersen1996, Roberts1996geo}.  Even quantitative upper bounds on the convergence rate have been rare with the exception of some MHI samplers \citep{Tierney1994}. 
Exact convergence rates in total variation in the general state space setting have been nonexistent until very recently, and, again, these are for MHI samplers \citep{Smith1996, Wang2022}. 
However, exact convergence rates in total variation have been studied only in trivial examples.
This leaves practitioners with little guidance on the convergence behavior and reliability of MHI samplers in practical applications, especially in high dimensions.

At the same time, there has been significant recent interest in the convergence properties of Monte Carlo Markov chains in high-dimensional settings \citep{Durmus2019, KarlOskar2019, Hairer2014, john:etal:2019, qin2019convergence, raja:spar:2015, yang:etal:2016} and traditional approaches can have limitations in this regime \citep{qin2021limitations}. 
This has led to an interest in considering the convergence rates of Monte Carlo Markov chains using Wasserstein distances 
\citep{Gibbs2004, Hairer2014, jin:tan:2020, Madras2010, qin2021bounds, qin2021wasserstein} which may scale to large problem sizes where other approaches have had difficulties \citep{Durmus2015, Hairer2014, qin2021wasserstein}.
Convergence analyses in Wasserstein distances also result in benefits similar to those obtained using total variation such as central limit theorems and concentration inequalities for time averages of the Markov chain \citep{Hairer2014, jin:tan:2020, Joulin2010, Komorowski2011}. 

We study exact convergence rates of the MHI sampler in $L_1$-Wasserstein distances, which we refer to as only Wasserstein distances.
The exact convergence behavior of Metropolis-Hastings algorithms across various Wasserstein distances has not been previously studied.
Only in specific Wasserstein distances, have upper bounds been developed \citep{Durmus2015, Hairer2014}.
We develop exact convergence rates which are universal across Wasserstein distances for the MHI sampler. 
Surprisingly, we show the exact convergence rate in total variation \citep{Wang2022} is also exact for Wasserstein distances weaker than total variation under mild assumptions.
We develop a new upper and lower bound on the worst-case Wasserstein distance when initialized from points.
Under mild assumptions similar to the ones used for the result in total variation \citep{Wang2022}, we show the convergence rate at any point initialization is the same as the worst-case convergence rate.
When the algorithm is started at a specific point, we give exact convergence expressions across more general Wasserstein distances.

Our theoretical results on the exact convergence rate extend the results in total variation \citep{Wang2022} to Wasserstein distances.
However, only a trivial example was studied in total variation \citep{Wang2022}.
We provide a practically relevant application of our theoretical results by developing exact convergence expressions using normal-inverse-gamma proposals in the Bayesian quantile regression setting. Previously, qualitative convergence results for a Gibbs sampler were developed \citep{Khare2012}.

Compared to methods used to approximate integrals such as importance sampling, MHI samplers can generate samples from the target distribution which is often of interest for practitioners. 
MHI samplers can also be computationally efficient at each iteration in contrast to more sophisticated Markov chain Monte Carlo algorithms but tend to require many iterations to accept a proposed sample.
Connections between the MHI sampler and rejection sampling are also well-known \citep{Liu1996, Tierney1994}. Exact convergence rates for MHI samplers may also provide further insight into more popular Metropolis-Hastings algorithms such as the MALA and RWM algorithms.

Motivated by the general theoretical work we consider using a centered Gaussian proposal and derive exact convergence expressions in Wasserstein distances for a large class of target distributions. The centered Gaussian proposal matches the maximal point of the proposal density with that of the target density.
By centering an independent proposal, we directly imbue the Markov chain with a strong attraction to a set where the target distribution has high probability.
This centered Gaussian proposal is similar to using a Laplace approximation \citep{Pierre2011, Shephard1997, Tierney1994}, but differs in its covariance matrix. 
We study this MHI in several Bayesian generalized linear models and derive exact convergence expressions in general Wasserstein distances.

Our techniques are based on a condition \citep{Mengersen1996, Tierney1994, Wang2022} which is well-known but has previously been difficult to scale in high-dimensional settings.
The novelty in our analysis is a carefully constructed proposal to develop exact convergence rates across Wasserstein distances.
We then consider scaling properties of the exact convergence rate to large dimensions and sample sizes in high-dimensional Bayesian binary response regression (e.g. logistic and probit regression) with Gaussian priors.
Data augmentation algorithms have been developed for these models \citep{Chib1993, Polson2013}, but the required matrix inversions at each iteration can be computationally intensive.
We derive an explicit asymptotic upper bound on the convergence rate of our MHI for general Wasserstein distances when the sample size and dimension increase in such a way that the ratio $d/n \to \gamma \in (0, +\infty)$.
In this case, 
we show 
informative convergence rates for practitioners for the MHI sampler which can scale to large problem sizes when the convergence analysis is exact.

To the best of our knowledge, this work is the first to successfully address the convergence complexity of Metropolis-Hastings in general Wasserstein distances when both the sample size and the dimension increase. 
Previously under the conditions of a central limit theorem, the convergence complexity in total variation of Random Walk Metropolis (RWM) on a compact set was studied \citep{Belloni2009}.
In contrast, our convergence complexity results do not rely on the underlying space being compact.
The dimension dependence of the mixing time has been studied in specific Wasserstein distances and total variation for Metropolis-Hastings algorithms such as Metropolis-adjusted Langevin (MALA) and RWM for certain log-concave target distributions \citep{Dwivedi2018, Eberle2014}.
We take into account the sample size and upper bound the convergence rate which provides further theoretical guarantees for time averages of the Markov chain \citep{Jones2004, Joulin2010}.
Related results have investigated the convergence properties of some high-dimensional Gibbs samplers \citep{Papaspiliopoulos2019, Papaspiliopoulos2021} or studied in the cases when the dimension or the sample size increase individually \citep{KarlOskar2019, john:etal:2019, qin2021wasserstein, raja:spar:2015}. 

The remainder is organized as follows.  In Section~\ref{section:prelim}, we
define the Metropolis-Hastings independence sampler and the Wasserstein distance.
In Section~\ref{sec:MHIS}, we develop exact convergence rates in the Wasserstein distance for the MHI sampler and apply this theory to Bayesian quantile regression. 
In Section~\ref{section:4}, we study a centered Gaussian
proposal to obtain exact convergence expressions and apply this to many popular Bayesian generalized linear models used in statistics. We also develop high-dimensional convergence complexity results for Bayesian binary response regression in the large dimension and large sample size regime. Section~\ref{section:conclusion} contains some final remarks.  Some technical details and proofs are deferred to the appendices.

\section{MHI samplers and Wasserstein distances}\label{section:prelim}

As they will be considered here, MHI samplers
simulate a Markov chain with invariant distribution $\Pi$ supported on a
nonempty set $\Theta \subseteq \R^d$ using a proposal distribution
$Q$ which, to avoid trivialities, is assumed throughout to be
different than $\Pi$. 
We also assume throughout that $\Pi$ has Lebesgue density $\pi$ with support $\Theta$ and $Q$ has Lebesgue density $q$ with support $\Theta$. 
Define
\[
a(\theta, \theta') = 
\begin{cases}
\min\left\{ \frac{ \pi(\theta') q(\theta) }{\pi(\theta)
    q(\theta')}, 1 \right\}, &\text{ if }\pi(\theta) q(\theta') > 0 \\ 
1, & \text{ if } \pi(\theta) q(\theta') = 0
\end{cases}.
\]
We will consider MHI samplers initialized at a point $\theta_0 \in \Theta$. MHI proceeds as follows: for $t \in \{1, 2, \ldots \}$, given $\theta_{t-1}$, draw $\theta'_t \sim Q(\cdot)$ and $U_t \sim \text{Unif}(0, 1)$ independently so that 
\[
\theta_{t} = 
  \begin{cases}
    \theta_t', \hspace{.5cm}\text{if } U_t 
    \le a\left( \theta_{t-1}, \theta'_t \right) 
    \\ 
    \theta_{t-1}, \hspace{.15cm}\text{otherwise}
  \end{cases}.
\]
If $\delta_\theta$ denotes the Dirac measure at the point $\theta$,
the MHI Markov kernel $P$ is defined for
$\theta \in \R^d$ and $B \subseteq \R^d$ by
\[
P(\theta, B)
= \int_{B} a(\theta, \theta') q(\theta') d\theta'
+ \delta_\theta(B) \left( 1 - \int a(\theta, \theta') q(\theta') d\theta' \right) .
\]
For $\theta \in \R^d$, define the Markov kernel at iteration time $t \ge 2$ recursively by
\[
P^t(\theta, B)
= \int P(\theta, d\theta') P^{t-1}(\theta', B).
\]

Let $\C(P^t(\theta, \cdot), \Pi)$ be the set of all joint probability
measures with marginals $P^t(\theta, \cdot)$
and $\Pi$ and $\rho$ be a lower semicontinuous metric. The $L_1$-Wasserstein distance \citep{Kantorovich1957, Villani2003, Villani2008}, which we will call simply the Wasserstein distance, is
\[
  \W_{\rho}\left( P^t(\theta, \cdot), \Pi \right) = \inf_{\xi \in
    \C(P^t(\theta, \cdot), \Pi)} \int
  \rho(\theta, \w) d\xi(\theta, \w) .
\]
Notice that when the metric $\rho$ is  $\rho(\theta, \w) = I_{\theta \not= \w}$, then the Wasserstein distance is the total variation distance $\norm{\cdot}_{\text{TV}}$.
More generally for a lower semicontinuous function $V \ge 1$, $\rho(\theta, \w) = [V(\theta) + V(\w)] I_{\theta \not= \w}$ defines a weighted total variation distance.
Another example is $\rho(\theta, \w) = \min\{ \norm{ \theta - \w },  1 \}$ which is always less than the Hamming metric used in total variation.
More general $L_p$-Wasserstein distances with $p \ge 2$ are not studied in this work. 

\section{Exact convergence rates for MHI samplers}
\label{sec:MHIS}

When the ratio of the proposal and target densities is bounded below by a positive number, that is, 
\[
\e^* = \inf_{\theta \in \Theta} \{ q(\theta)/\pi(\theta) \} > 0,
\] 
the MHI sampler is uniformly ergodic in total variation with convergence rate upper bounded by $1 - \e^*$ \citep[Corollary 4]{Tierney1994}. 
Unlike in accept-reject sampling, $\e^*$ does not need to be known explicitly or computed in order to implement MHI.
However, this requirement was shown to be necessary for uniform ergodicity in total variation \citep[Theorem 2.1]{Mengersen1996}. More recently, it was shown the convergence rate cannot be improved \citep[Theorem 1]{Wang2022}. 
We show this is the case even in weaker Wasserstein distances where the lower bound does not follow trivially from that of the total variation lower bound \citep[Theorem 1]{Wang2022}. 

\begin{theorem}
\label{thm:sharp_rate}
Suppose $\rho(\cdot, \cdot) \le 1$.
Then
\begin{align*}
\sup_{\theta \in \Theta} \W_\rho(P^t(\theta, \cdot), \Pi)
\le (1 - \e^*)^t \sup_{\theta \in \Theta} \int \rho(\cdot, \theta) d\Pi.
\end{align*}
If in addition, $q$ is lower semicontinuous on $\Theta$, $\pi$ is upper semicontinuous on $\Theta$, and
$\Theta$ can be expressed as a countable union of compact sets, then
\begin{align*}
(1 - \e^*)^t \inf_{\theta \in \Theta} \int \rho(\cdot, \theta) d\Pi
\le 
\sup_{\theta \in \Theta} \W_\rho(P^t(\theta, \cdot), \Pi)
\le (1 - \e^*)^t \sup_{\theta \in \Theta} \int \rho(\cdot, \theta) d\Pi.
\end{align*}
\end{theorem}

\begin{proof}
The proof is provided in Appendix~\ref{proof:thm:sharp_rate}.
\end{proof}

The semicontinuity assumption is not required when working with the total variation distance \citep[Theorem 1]{Wang2022}, but it is a mild assumption that holds in many practical applications.
The upper bound constant can improve upon upper bounds in total variation \citep{Tierney1994} if for example, $\rho$ is continuous and $\Theta$ is compact.
If $\e^* = 0$, Theorem~\ref{thm:sharp_rate} also gives the lower bound
\[
\inf_{\theta \in \Theta} \int \rho(\cdot, \theta) d\Pi
\le 
\sup_{\theta \in \Theta} \W_\rho(P^t(\theta, \cdot), \Pi),
\]
which shows MHI cannot converge uniformly from any starting point for many Wasserstein distances.
Thus, under mild assumptions, Theorem~\ref{thm:sharp_rate} gives a complete characterization of the worst-case convergence of the MHI sampler in many Wasserstein distances.

Exact convergence expressions are available when the Markov chain is initialized at $\theta^* = \text{argmin}\left\{ q(\theta) / \pi(\theta) : \theta \in \Theta \right\}$ using techniques from \citep{Wang2022}.  

\begin{proposition}
\label{prop:exactwasserstein}
Suppose  there exists a solution 
\[
\theta^* = \text{argmin}\left\{ q(\theta) / \pi(\theta) : \theta \in \Theta \right\}.
\] 
Then
\[
\W_{\rho}\left( P^t(\theta^*, \cdot), \Pi \right)
= \left( 1 - q(\theta^*)/\pi(\theta^*) \right)^t \int \rho(\theta, \theta^*) d\Pi(\theta) .
\]
\end{proposition}

\begin{proof}
Define $\e_{\theta^*} = q(\theta^*)/\pi(\theta^*) = \e^*$.
Under our assumptions $P^t(\theta^*, \cdot)$ can be represented as a convex combination of the target distribution and the Dirac measure at the point $\theta^*$
\citep[][Remark 1, Theorem 2]{Wang2022}, that is,
\begin{align}
P^t(\theta^*, \cdot)
&= \left( 1 - \left( 1 - \e_{\theta^*} \right)^t \right) \Pi + \left( 1 - \e_{\theta^*} \right)^t \delta_{\theta^*} .
\end{align}
Let $\psi : \Theta \to \R$ be a function such that $\int_\Theta |\psi| d\Pi < \infty$.
We have
the identity,  
\begin{align}
\int_\Theta \psi dP^t(\theta^*, \cdot)
= \left( 1 - \left( 1 - \e_{\theta^*} \right)^t \right) \int_\Theta \psi
  d\Pi(B) + \left( 1 - \e_{\theta^*} \right)^t
  \psi(\theta^*). \label{eq:exactbias} 
\end{align}
Since the only coupling between $\Pi^*$ and the Dirac measure $\delta_{\theta^*}$ is the independent coupling \citep{Giraudo}, the Wasserstein distance takes the simple form $\W_{\rho}\left( \delta_{\theta^*}, \Pi \right) = \int \rho(\theta, \theta^*) d\Pi(\theta)$. 

Since $q$ is not exactly $\pi$, then $\e_{\theta^*} \in (0, 1)$.
Let $M_b(\R^d)$ be the set of bounded measurable functions on $\R^d$ and for real-valued functions $\phi$, let $\norm{\phi}_{\text{Lip}(\rho)} = \sup_{x, y, x \not= y} \{ |\phi(x) - \phi(y)| / \rho(x, y) \}$ denote the Lipschitz norm with respect to the distance $\rho$.
Applying the Kantorovich-Rubinstein theorem \citep[][Theorem 1.14]{Villani2003},
\begin{align*}
\W_{\rho}\left( P^t(\theta^*, \cdot), \Pi \right)
&= \sup_{\substack{\phi \in M_b(\R^d) \\ \norm{\phi}_{\text{Lip}(\rho)} \le 1}} \int_{\Theta} \phi d \left( P^t(\theta^*, \cdot) - \Pi \right)
\\
&= \sup_{\substack{\phi \in M_b(\R^d) \\ \norm{\phi}_{\text{Lip}(\rho)} \le 1}} \left\{ (1 - \e_{\theta^*})^t \int_{\Theta} \phi d \left( \delta_{\theta^*} - \Pi \right) \right\}
\\
&=  (1 - \e_{\theta^*})^t \sup_{\substack{\phi \in M_b(\R^d) \\ \norm{\phi}_{\text{Lip}(\rho)} \le 1}}\int_{\Theta} \phi d \left( \delta_{\theta^*} - \Pi \right)
\\
&= 
(1 - \e_{\theta^*})^t \W_{\rho}\left( \delta_{\theta^*}, \Pi \right)
\\
&= (1 - \e_{\theta^*})^t \int_{\Theta} \rho(\theta, \theta^*) d\Pi(\theta).
\end{align*}

\end{proof}.

\subsection{Application: Bayesian quantile regression}

Fix $r \in (0, 1)$ and suppose, for $i=1,\ldots,n$, $\e_i$ are independent and identically distributed (i.i.d.) with density 
\[
p_r(\e) = r(1 - r) \left( \exp((1 - r) \e ) I_{\e < 0} + \exp(-r\e) I_{\e \ge 0} \right).
\]
Let $v_0, s_0 \in (0, \infty)$ and $C \in \R^{d \times d}$ be symmetric positive-definite. 
We paramaterize the inverse gamma distribution so that if $\sigma \sim \text{IG}(v, s)$ for some $v, c \in (0, \infty)$, then it has a density proportional to $\sigma^{-v - 1} \exp(-s/\sigma)$.
Assume the Bayesian quantile regression model for $i \in 1, \ldots, n$ where $X_i \in \R^d$ is fixed and
\begin{align*}
&\sigma \sim \text{IG}(v_0, s_0)
\\
&\beta | \sigma \sim \text{N}_d(0, \sigma C)
\\
&Y_i = \beta^T X_i + \sigma \e_i.
\end{align*}
Let $\Pi(\cdot | X, Y)$ denote the posterior and $\pi(\cdot | X, Y)$ denote the density for this Bayesian model with normalizing constant $Z_{\Pi(\cdot | X, Y)}$.

Upper bounds on the convergence rate were previously investigated for Gibbs samplers \citep{Khare2012} in this setting.  We will study the MHI sampler with a normal-inverse-gamma proposal constructed as follows. Define $\l_r(u) = u (r - I_{u < 0})$ and $s_{n, r}: \R^d \to \R$ by $s_{n, r}(\beta) = \sum_{i = 1}^n \l_r(Y_i - \beta^T X_i) + \beta^T C^{-1} \beta / 2$. Since $s_{n, r}$ is strongly convex, let $\beta^* \in \R^d$ be the unique minimum of the function $s_{n, r}$. Now the MHI proposal is given by
\begin{align*}
&\sigma \sim \text{IG}(n + v_0, s_0 + s_{n, r}(\beta^*))
\\
&\beta | \sigma \sim \text{N}_d(\beta^*, \sigma C).
\end{align*}
Let $\Gamma : \R \to \R$ be the usual Gamma function and define 
\[
\e_{\beta^*} = Z_{\Pi(\cdot | X, Y)} \left( 2 \pi \right)^{-\frac{d}{2}} \det\left( C \right)^{-1/2} \left( s_0 + s_{n, r}(\beta^*) \right)^{n + v_0} \Gamma(n + v_0)^{-1}.
\]
The following gives an exact convergence rate of this algorithm which completely characterizes its convergence from a specific initialization.

\begin{theorem}
\label{theorem:quantilereg}
For any $\sigma_0 \in (0, \infty)$
\[
\W_{\rho}\left( P^t( (\beta^*, \sigma_0), \cdot ), \Pi(\cdot | X, Y) \right)
= \left( 1 - \e_{\beta^*} \right)^t \int \rho((\beta, \sigma), (\beta^*, \sigma_0)) d\Pi(\beta, \sigma | X, Y).
\]
\end{theorem}

\begin{proof}
We may define the function $f : \R^d \times (0, \infty) \to \R$ by
\begin{align*}
f(\beta, \sigma)
= \frac{s_0 + s_{n, r}(\beta)}{\sigma} + (n + v_0 + 1 + d/2) \log(\sigma)
\end{align*}
and write the posterior density as $\pi(\beta, \sigma | X, Y) = Z_{\Pi(\cdot | X, Y)}^{-1} \exp\left(-f(\beta, \sigma) \right)$. Since the function $\beta \mapsto s_{n, r}(\beta) - \beta^T C^{-1} \beta / 2$ is a convex function on $\R^d$, then by Lemma~\ref{lemma:subgradient_inequality} for every $\beta \in \R^d$,
\[
s_{n, r}(\beta) \ge s_{n, r}(\beta^*) + \frac{1}{2} \left( \beta - \beta^* \right)^T C^{-1} \left( \beta - \beta^* \right).
\]
For any $\left( \beta, \sigma \right) \in \R^d \times (0, \infty)$, we then have the lower bound
\begin{align*}
&f(\beta, \sigma)
\\
&=  \frac{s_0 + s_{n, r}(\beta)}{\sigma} + (n + v_0 + 1 + d/2) \log(\sigma)
\\
&\ge \frac{s_0 + s_{n, r}(\beta^*)}{\sigma} + (n + v_0 + 1 + d/2) \log(\sigma) + \frac{1}{2 \sigma} \left( \beta - \beta^* \right)^T C^{-1} \left( \beta - \beta^* \right).
\end{align*}
This implies
\begin{align*}
&f(\beta, \sigma) 
\\
&- \frac{1}{2 \sigma} \left( \beta - \beta^* \right)^T C^{-1} \left( \beta - \beta^* \right) - \frac{s_0 + s_{n, r}(\beta^*)}{\sigma} 
- (n + v_0 + 1 + d/2) \log(\sigma)
\\
&\ge 0.
\end{align*}
Let $q$ denote the proposal's normal-inverse-gamma density.
For any $\sigma_0 \in (0, \infty)$ and for every $\left( \beta, \sigma \right) \in \R^d \times (0, \infty)$, we have shown
\begin{align*}
\frac{q(\beta, \sigma)}{\pi(\beta, \sigma)}
&\ge Z_{\Pi(\cdot | X, Y)} \left( 2 \pi \right)^{-\frac{d}{2}} \det\left( C \right)^{-1/2} \left( s_0 + s_{n, r}(\beta^*) \right)^{n + v_0} \Gamma(n + v_0)^{-1}
\\
&= \frac{q(\beta^*, \sigma_0)}{\pi(\beta^*, \sigma_0)}.
\end{align*}
An application of Proposition \ref{prop:exactwasserstein} completes the proof.
\end{proof}

Note that $\e_{\beta^*}$ is difficult to compute since it depends on the normalizing constant, but we give an example later where upper bounding the convergence rate is possible in Bayesian logistic regression.

\section{The convergence rate at arbitrary initializations}
\label{section:convergence_at_every_point}

The previous section studies the worst-case convergence rate and the convergence rate at an individual point for the MHI sampler.
We can study the convergence rate at every point as was done for total variation \citep{Wang2022}.
The technique needed to prove this relies on the exact representation of the MHI sampler \citep[Theorem 1]{Smith1996} but new techniques are needed to show this in the Wasserstein distance.
Similar to the convergence rate in total variation \citep{Wang2022}, we define the Wasserstein convergence rate for a point $\theta \in \Theta$ as
\[
r_{\rho}(\theta)
= \lim_{t \to \infty} \W_{\rho}(P^t(\theta, \cdot), \Pi)^{1/t}.
\]

When the distance metric is $\min\{ \norm{\cdot}, 1 \}$ where $\norm{\cdot}$ can be any norm, Theorem~\ref{thm:rate_at_every_point} shows we can obtain the convergence rate at every point under mild conditions.
We require $\pi, q$ to be locally Lipschitz continuous and bounded which is stronger than only locally Lipschitz as in total variation \citep{Wang2022}.
However, this additional condition is satisfied in many practical applications in statistics.
Theorem~\ref{thm:rate_at_every_point} also lower bounds the convergence rate for Wasserstein $L_p$-distances and the rate of convergence for these distances cannot be improved.

\begin{theorem}
\label{thm:rate_at_every_point}
If $\pi, q$ are locally Lipschitz continuous and bounded on $\R^d$ and there exists a solution 
\[
\theta^* = \text{argmin}\left\{ q(\theta) / \pi(\theta) : \theta \in \Theta \right\},
\] 
then for any $\theta \in \Theta$, the Wasserstein convergence rate is the same with
\[
r_{ \min\{ \norm{\cdot}, 1 \} }(\theta)
= 1 - q(\theta^*) / \pi(\theta^*).
\]
\end{theorem}

\begin{proof}
If the initialization is at $\theta^*$, then the result follows from Proposition~\ref{prop:exactwasserstein}.
Fix a point $\theta_0 \in \Theta$ such that $\theta_0 \not= \theta^*$.
Using Lemma~\ref{lem:MHIS_UB}, we have an upper bound on the convergence rate by
\[
\limsup_{t \to \infty} \W_{\min\{ \norm{\cdot}, 1 \}}(P^t(\theta_0, \cdot), \Pi)^{1/t}
\le 1 - q(\theta^*)/\pi(\theta^*).
\]
It remains to lower bound the limit inferior.

For $h \in (0, 1]$, define the function 
\[
\phi_h(\theta) = \frac{1}{(2h)^d} \exp(-h^{-1} \norm{\theta - \theta^*}_1).
\]
This is the probability density function for a Laplace distribution and $\phi_h$ is $2^{-d} h^{-d - 1}$ Lipschitz, and so $2^{d} h^{d + 1} \phi_h(\theta) = h \exp(-h^{-1} \norm{\theta - \theta^*}_1)$ is nonnegative, $1-$Lipschitz and bounded by $1$.
In particular, it is readily shown that $2^{d} h^{d + 1} \phi_h(\theta)$ is $\min\{ 1, \norm{\cdot}_1 \}$ Lipschitz.
Using Kantorovich-Rubinstein duality \citep[Theorem 1.14]{Villani2003}, we have the lower bound
\begin{align}
\W_{\min\{ \norm{\cdot}_1, 1 \}}(P^t(\theta_0, \cdot), \Pi)
&\ge 
2^{d} h^{d + 1} \left[ \int \phi_h d\Pi - \int \phi_h dP^t(\theta_0, \cdot) \right] \label{eq:kr_exact_rate_lb}.
\end{align}

We will develop some approximation properties of $\phi_h$.
Since $\theta_0 \not= \theta^*$, then it is readily shown that $\lim_{h \downarrow 0} \phi_h(\theta_0) = 0$.
Using change of variables and since we have assumed $\sup_{\theta} \pi(\theta) < \infty$, then
\begin{align}
\left| \int \phi_h(\theta') \pi(\theta') d\theta' - \pi(\theta^*) \right|
&\le \int_{\norm{\theta'}_2 \le t} \left| \pi(\theta^* + h \theta') - \pi(\theta^*) \right| 
2^{-d} \exp(-\norm{\theta'}_1) d\theta'  \label{eq:mollifier_approx1}
\\
&\hspace{.4cm}+ 2 \sup_{\theta} \pi(\theta) \int_{\norm{\theta'}_2 > t} 2^{-d} \exp(-\norm{\theta'}_1) d\theta'. \label{eq:mollifier_approx2}
\end{align}
If $Y_1, \ldots, Y_d$ i.i.d. Laplace, then we have the tail bound
\begin{align*}
\P(\norm{Y}_2 \ge t)
\le \P\left( \max_{i} |Y_i| \ge \frac{t}{\sqrt{d}} \right)
&\le \sum_{i = 1}^d \P\left( |Y_i| \ge \frac{t}{\sqrt{d}} \right)
\\
&\le d \exp\left( -\frac{t}{\sqrt{d}} \right).
\end{align*}
Since norms are equivalent on $\R^d$, then $\pi, q$ are locally Lipschitz with respect to any norm.
Choosing $h = h_0/t^2$ for some $h_0 \in (0, 1)$, since $\pi$ is locally Lipschitz, we can find a universal constant $L \in (0, \infty)$ so that for any $\norm{\theta'}_2 \le t$.
\[
\left| \pi(\theta^* + h \theta') - \pi(\theta^*) \right|
\le \frac{h_0 L}{t}.
\]
Applying these upper bounds to \eqref{eq:mollifier_approx1} and \eqref{eq:mollifier_approx2}, then for large enough $t$,
\begin{align}
\left| \int \phi_h(\theta') \pi(\theta') d\theta' - \pi(\theta^*) \right|
&\le \frac{2 h_0 L}{t}. \label{eq:mollifier_bound_pi}
\end{align}
A similar argument with the assumptions on $q$ yields for large enough $t$,
\begin{align}
\left| \int \phi_h(\theta') q(\theta') d\theta' - q(\theta^*) \right|
&\le \frac{2 h_0 L}{t}.
\label{eq:mollifier_bound_q}
\end{align}

It remains to lower bound \eqref{eq:kr_exact_rate_lb}.
We will use the exact representation of the independence sampler \citep[Theorem 1, Lemma 3]{Smith1996}.
Define the importance sampling weight by $w(\theta) = \pi(\theta) / q(\theta)$ and its maximum $w^* = w(\theta^*) = \pi(\theta^*) / q(\theta^*)$.
The rejection probability can be represented for $\theta \in \Theta$ by
\[
\lambda(w(\theta))
= \int_{w(\theta') \le w(\theta)} \left[ 1 - \frac{w(\theta')}{w(\theta)}\right] q(\theta') d\theta'.
\]
For $w \in (0, \infty)$, define 
\[
T_t(w)
= \int_{w}^{\infty} \frac{t \lambda^{t-1}(v)}{v^2} dv,
\]
and using the exact representation of the independence sampler \citep[Theorem 1, Lemma 3]{Smith1996}, for measurable sets $B \subset \R^d$
\[
P^t(\theta_0, B)
= \int_B T_t( \max\{ w(\theta_0), w(\theta') \}) \pi(\theta') d\theta'
+ \lambda^t(w(\theta_0)) \delta_{\theta_0}(B).
\]
The proof of existing results \citep[Theorem 4]{Wang2022} shows that
\[
T_t(w)
\le 1 + (1 - 1/w^*)^t \left[ \frac{t}{w^* - 1} \left( \frac{w^*}{w} - 1 \right) - 1 \right].
\]

We now use the exact representation of the independence sampler to lower bound \eqref{eq:kr_exact_rate_lb}.
We have $\lambda^t(w(\theta_0)) \le (1 - 1/w^*)^t$ and so we then have the upper bound
\begin{align}
&\int \phi_h dP^t(\theta_0, \cdot) - \int \phi_h d\Pi \nonumber
\\
&\le -(1 - 1/w^*)^t \int \phi_h d\Pi
+ \lambda^t(w(\theta_0)) \phi_h(\theta_0) \nonumber
\\
&\hspace{.4cm}+ (1 - 1/w^*)^t \frac{t}{w^* - 1} \int \left[ \frac{w^*}{ \max\{ w(\theta_0) , w(\theta') \}} - 1 \right] \phi_h(\theta') \pi(\theta') d\theta' \nonumber
\\
&\le (1 - 1/w^*)^t \left[ \phi_h(\theta_0) - \int \phi_h d\Pi \right] \nonumber
\\
&\hspace{.4cm}+ (1 - 1/w^*)^t \frac{t}{w^* - 1} \left[ w^* \int \phi_h(\theta') q(\theta') d\theta' 
- \int \phi_h(\theta') \pi(\theta') d\theta' \right]. \label{eq:ub_exact_first}
\end{align}
Using the upper bound \eqref{eq:ub_exact_first} with the approximations \eqref{eq:mollifier_bound_pi} and \eqref{eq:mollifier_bound_q} yields
\begin{align*}
&\int \phi_h dP^t(\theta_0, \cdot) - \int \phi_h d\Pi
\\
&\le (1 - 1/w^*)^t \left[ \phi_h(\theta_0) - \pi(\theta^*) + \frac{2 h_0 L}{t} \right]
\\
&\hspace{.4cm}+ (1 - 1/w^*)^t \frac{t}{w^* - 1} \left[ w^* q(\theta^*) - \pi(\theta^*) 
+ (w^* + 1) \frac{2 h_0 L}{t} \right]
\\
&\le (1 - 1/w^*)^t \left[ \phi_h(\theta_0) - \pi(\theta^*) + \frac{2 h_0 L}{t} + \frac{w^* + 1}{w^* - 1} 2 h_0 L \right].
\end{align*}
Therefore, we can choose a small enough $h_0 \in (0, 1)$ independently of $t$ so that we have the upper bound
\begin{align*}
\int \phi_h dP^t(\theta_0, \cdot) - \int \phi_h d\Pi
&\le -\frac{\pi(\theta^*)}{4} (1 - 1/w^*)^t.
\end{align*}
Applying these bounds to \eqref{eq:kr_exact_rate_lb} and using that we have chosen $h = h_0/t^2$, we have the lower bound
\begin{align*}
\W_{ \min\{ \norm{\cdot}_1, 1 \}}(P^t(\theta_0, \cdot), \Pi)
&\ge 
2^{d} h^{d + 1} \left[ \int \phi_h d\Pi - \int \phi_h dP^t(\theta_0, \cdot) \right]
\\
&\ge 2^{d - 2} h^{d + 1} \pi(\theta^*) (1 - 1/w^*)^t
\\
&\ge 2^{d - 2} \left( \frac{h_0}{t^2} \right)^{d + 1} \pi(\theta^*) (1 - 1/w^*)^t.
\end{align*}
Taking the limit
\[
\liminf_{t \to \infty} \W_{ \min\{ \norm{\cdot}_1, 1 \}}(P^t(\theta_0, \cdot), \Pi)^{1/t}
\ge 1 - 1/w^*
= 1 - q(\theta^*) / \pi(\theta^*).
\]
Since all norms are equivalent on $\R^d$, this can be extended to any norm $\norm{\cdot}$.
\end{proof}

\section{MHI samplers with centered Gaussian proposals}
\label{section:4}

We look to apply the exact convergence expression from Proposition~\ref{prop:exactwasserstein} to practical applications since the convergence rate is the same at every initialization under mild assumptions. 
Recently, centered drift functions have been used to improve convergence analyses of some Monte Carlo Markov chains \cite{KarlOskar2019, qin2019convergence, qin2021wasserstein}. Our focus is instead on centering the proposal distribution, that is, matching the optimal points of the proposal and target densities similar to Laplace approximations.

We shall see in the next section that by centering a Gaussian proposal, we may satisfy the assumptions of Proposition~\ref{prop:exactwasserstein} for a general class of target distributions with $\theta^*$ being the optimum of the target's density.  While we focus on Gaussian proposals,
the technique of centering proposals is in fact more general.

We will assume the target
distribution $\Pi$ is a probability distribution supported on $\R^d$.
With $f : \R^d \to \R$ and normalizing constant $Z_{\Pi}$, define the density $\pi$ by
$\pi(\theta) = Z_{\Pi}^{-1} \exp\left( -f(\theta) \right)$. 
Let $\theta^*$ be the unique maximum of $\pi$, $\alpha \in (0, +\infty)$, and $C \in \R^{d \times d}$ be a symmetric, positive-definite matrix. 
Let the proposal distribution $Q$ with density $q$ correspond to a $d$-dimensional Gaussian distribution, $\text{N}_{d}(\theta^*, \alpha^{-1} C )$. In this case, the ratio of the proposal density and target density is $\e_{\theta^*} = (2 \pi)^{-d/2} \alpha^{d/2} \det\left( C \right)^{-1/2} Z_{\Pi} \exp(f(\theta^*))$.

\begin{proposition}
 \label{theorem:gaussianexactwasserstein}
  If $\theta^*$ exists and, for
  any $\theta \in \R^d$,
\[
f(\theta) \ge f(\theta^*) + \alpha \left( \theta -
    \theta^* \right)^T C^{-1} \left( \theta - \theta^* \right)/2,
\]
then
\[
\W_{\rho}\left( P^t( \theta^*, \cdot ), \Pi \right)
= \left( 1 - \e_{\theta^*} \right)^t \int \rho(\theta, \theta^*) d\Pi(\theta).
\]
\end{proposition}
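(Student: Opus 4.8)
The plan is to recognize this as a direct application of Proposition~\ref{proposition:exactwasserstein}, so that the entire task reduces to verifying that the centered Gaussian proposal satisfies Assumption~\ref{assumption:1} with the stated $\theta^*$ and the stated constant $\e_{\theta^*}$. The first part of Assumption~\ref{assumption:1} holds by construction, since the proposal $\text{N}_d(\theta^*, \alpha^{-1} C)$ does not depend on the previous iterate. The substance of the argument is to show that $\theta^*$, the maximizer of $\pi$, is also the minimizer of the ratio $q(\theta)/\pi(\theta)$ over $\{ \theta : \pi(\theta) > 0 \} = \R^d$, where the last equality holds because $\pi = Z_{\Pi}^{-1}\exp(-f) > 0$ everywhere.

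First I would write out the ratio explicitly. Using $(\alpha^{-1}C)^{-1} = \alpha C^{-1}$ and $\det(\alpha^{-1}C)^{-1/2} = \alpha^{d/2}\det(C)^{-1/2}$, the proposal density is
\[
q(\theta) = (2\pi)^{-d/2}\alpha^{d/2}\det(C)^{-1/2}\exp\left( -\frac{\alpha}{2}(\theta - \theta^*)^T C^{-1}(\theta - \theta^*) \right),
\]
so that
\[
\frac{q(\theta)}{\pi(\theta)} = (2\pi)^{-d/2}\alpha^{d/2}\det(C)^{-1/2} Z_{\Pi}\exp\left( f(\theta) - \frac{\alpha}{2}(\theta - \theta^*)^T C^{-1}(\theta - \theta^*) \right).
\]
The key step is then to invoke the hypothesis $f(\theta) \ge f(\theta^*) + \alpha(\theta - \theta^*)^T C^{-1}(\theta - \theta^*)/2$, which is equivalent to saying that the exponent $f(\theta) - \alpha(\theta - \theta^*)^T C^{-1}(\theta - \theta^*)/2$ is bounded below by $f(\theta^*)$ for every $\theta$, with equality at $\theta = \theta^*$. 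Since $\exp$ is increasing, this shows the ratio attains its global minimum at $\theta^*$, so the argmin required in Assumption~\ref{assumption:1} exists and equals $\theta^*$.

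It then remains to confirm that the minimum value matches the definition of $\e_{\theta^*}$: evaluating the ratio at $\theta = \theta^*$ annihilates the quadratic term and leaves exactly $\e_{\theta^*} = (2\pi)^{-d/2}\alpha^{d/2}\det(C)^{-1/2} Z_{\Pi}\exp(f(\theta^*))$, which is consistent with the relation $\e_{\theta^*} = q(\theta^*)/\pi(\theta^*)$ used in Proposition~\ref{proposition:exactwasserstein}. With Assumption~\ref{assumption:1} verified, applying Proposition~\ref{proposition:exactwasserstein} immediately yields the claimed exact expression. I do not expect a serious obstacle here beyond the algebraic bookkeeping; the only conceptual point is recognizing that the quadratic domination condition on $f$ is precisely the statement that $\theta^*$ minimizes the proposal-to-target ratio, which is exactly what centering the Gaussian proposal at the mode of $\pi$ is designed to guarantee.
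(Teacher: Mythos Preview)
Your proposal is correct and follows essentially the same approach as the paper: write out the ratio $q(\theta)/\pi(\theta)$, use the quadratic lower bound hypothesis on $f$ to conclude that this ratio is minimized at $\theta^*$, verify Assumption~\ref{assumption:1}, and then invoke Proposition~\ref{proposition:exactwasserstein}. If anything, your version is slightly more explicit in checking that the minimum value of the ratio equals the stated $\e_{\theta^*}$.
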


\begin{proof}
  Since the proposal density has been centered at the point
  $\theta^*$, it then satisfies
  $q(\theta^*) = (2 \pi)^{-d/2} \alpha^{d/2} \det\left( C \right)^{-1/2}$.  For every
  $\theta \in \R^d$, we have the lower bound
\begin{align*}
  \frac{q(\theta)}{\pi(\theta)}
  &= (2 \pi)^{-d/2} \alpha^{d/2} \det\left( C \right)^{-1/2} Z_{\Pi} \exp\left( f(\theta) - \frac{\alpha}{2}
    \left( \theta - \theta^*
    \right)^T C^{-1} \left( \theta -
    \theta^* \right) \right) 
  \\
  &\ge (2 \pi)^{-d/2} \alpha^{d/2} \det\left( C \right)^{-1/2} Z_{\Pi} \exp\left( f(\theta^*) \right)
  \\
  &= \frac{q(\theta^*)}{\pi(\theta^*)}.
\end{align*}

Since both densities are positive and the proposal is independent of
the previous iteration, we have shown that the conditions for Proposition~\ref{prop:exactwasserstein} are satisfied and an application of Proposition
\ref{prop:exactwasserstein} with the proposal and target distribution $Q$ and $\Pi$ as we have defined them in this section completes the proof.
\end{proof}
 
The point $\theta^*$ is guaranteed to exist if the function $f$ satisfies
a convexity property.
A function $h : \R^d \to \R$ is strongly convex with parameter $\mu$ if there is an $\mu \in (0, +\infty)$ so that
$h(\cdot) - \mu \norm{\cdot}^2 / 2$ is convex
\citep{Urruty2001, Nesterov2018}.  The norm in this definition is
often taken to be the Euclidean norm, but we will use the
norm induced by the matrix $C^{-1}$. 
We consider using a Gaussian proposal centered at a point $\theta_0$ which is not necessarily the optimum of the target density.
Let $g_{f(\theta_0)} \in \R^d$ be a subgradient of $f$ at $\theta_0$ \citep{Nesterov2018}.
For a point $\theta_0 \in \R^d$, we consider the proposal corresponding to a $d$-dimensional Gaussian distribution, $\text{N}_{d}(\theta_0 - \alpha^{-1} C g_{f(\theta_0)}, \alpha^{-1} C)$.  When $f$ is differentiable, this construction of the proposal uses the gradient of $f$ in a similar way as MALA. 
The ratio of the proposal and target density evaluated at $\theta_0$ is $\e_{\theta_0} = (2 \pi)^{-d/2} \det\left( \alpha^{-1} C \right)^{-1/2} Z_{\Pi} \exp(f(\theta_0) - {g_{f(\theta_0)}}^T C g_{f(\theta_0)} / (2\alpha) )$.
Choosing $\theta_0 \equiv \theta^*$ yields the centered Gaussian proposal, but we also have an exact convergence expression in other cases.

\begin{proposition}
\label{theorem:stronglyconvexexactwasserstein}
If the function
  $\theta \mapsto f(\theta) - \alpha \theta^T C^{-1} \theta / 2$ is convex for
  all points on $\R^d$, then
\[
\W_{\rho}\left( P^t(\theta_0, \cdot), \Pi \right)
= (1 - \e_{\theta_0})^t \int \rho(\theta, \theta_0) d\Pi(\theta).
\]
\end{proposition}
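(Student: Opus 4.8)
The plan is to show that the convexity hypothesis is exactly a strong-convexity condition on $f$ and that this condition supplies both ingredients required by Proposition~\ref{theorem:gaussianexactwasserstein}: the existence of the minimizer $\theta^*$ and the quadratic lower bound $f(\theta) \ge f(\theta^*) + \alpha(\theta - \theta^*)^T C^{-1}(\theta - \theta^*)/2$. Writing $g(\theta) = f(\theta) - \alpha \theta^T C^{-1}\theta/2$, the assumption is precisely that $g$ is convex, that is, that $f$ is strongly convex with parameter $\alpha$ in the norm induced by $C^{-1}$, in the sense recalled just before the statement.

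First I would establish the existence and uniqueness of $\theta^*$. Fixing any base point and applying the subgradient inequality for the convex function $g$ together with the positive-definiteness of $C^{-1}$, one sees that $f(\theta)$ is bounded below by an affine function plus $\alpha\,\theta^T C^{-1}\theta/2$; since this quadratic term diverges as $\norm{\theta} \to +\infty$, $f$ is coercive and therefore attains a minimum on $\R^d$. Strict convexity of $f$, inherited from strong convexity, forces this minimizer to be unique. Because $\pi(\theta) = Z_\Pi^{-1}\exp(-f(\theta))$, the minimizer of $f$ is the unique maximizer $\theta^*$ of $\pi$, so Assumption~\ref{assumption:1} can indeed be met.

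Next I would derive the quadratic lower bound, which I expect to be the only genuine computation. At the minimizer we have $0 \in \partial f(\theta^*)$, and since the quadratic piece is smooth this means $-\alpha C^{-1}\theta^* \in \partial g(\theta^*)$. Applying the subgradient inequality for $g$ at $\theta^*$ with this subgradient and substituting $g = f - \alpha(\cdot)^T C^{-1}(\cdot)/2$, the linear and constant terms combine into a perfect square, yielding
\[
f(\theta) \ge f(\theta^*) + \frac{\alpha}{2}(\theta - \theta^*)^T C^{-1}(\theta - \theta^*)
\]
for every $\theta \in \R^d$. This is exactly the hypothesis of Proposition~\ref{theorem:gaussianexactwasserstein}, so a direct application of that result gives the claimed identity. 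The main subtlety is purely bookkeeping: handling the possible non-differentiability of $f$ through subgradients, as in \citep{Urruty2001, Nesterov2018}, rather than through gradients; if one is willing to assume $f$ differentiable, the same argument runs verbatim with $\nabla f(\theta^*) = 0$.
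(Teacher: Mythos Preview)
Your proposal is correct and follows essentially the same route as the paper: establish coercivity and hence existence (and uniqueness) of the minimizer $\theta^*$, use first-order optimality at $\theta^*$ together with convexity of $g$ to obtain the quadratic lower bound $f(\theta) \ge f(\theta^*) + \tfrac{\alpha}{2}(\theta-\theta^*)^T C^{-1}(\theta-\theta^*)$, and then invoke Proposition~\ref{theorem:gaussianexactwasserstein}. The only cosmetic difference is that the paper phrases the optimality step via right directional derivatives (taking $\lambda \downarrow 0$ in the strong-convexity inequality) whereas you use the subgradient inequality for $g$ with $-\alpha C^{-1}\theta^* \in \partial g(\theta^*)$; these are equivalent devices for a finite convex function on $\R^d$.
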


\begin{proof}
Since the function
  $f(\theta) - \alpha \theta^T C^{-1} \theta/2$ is convex for
  all points on $\R^d$, then for each $\theta \in \R^d$,
\begin{align*}
f(\theta)
&\ge f(\theta_0) 
+ {g_{f(\theta_0)}}^T (\theta - \theta_0)
+ \frac{\alpha}{2} \left( \theta - \theta_0 \right)^T C^{-1} (\theta - \theta_0)
\\
&= f(\theta_0) - \frac{1}{2 \alpha} (C g_{f(\theta_0)})^T g_{f(\theta_0)}
\\
&\hspace{.5cm}+ \frac{\alpha}{2} \left( \theta - \theta_0 +  \alpha^{-1} C g_{f(\theta_0)} \right)^T C^{-1} \left( \theta - \theta_0 + \alpha^{-1} C g_{f(\theta_0)} \right).
\end{align*}
This implies for every $\theta \in \R^d$, the ratio of the proposal density $q$ corresponding to the distribution $\text{N}_{d}(\theta_0 - \alpha^{-1} C g_{f(\theta_0)}, \alpha^{-1} C)$ and target density $\pi$ satisfies
\[
\frac{q(\theta)}{\pi(\theta)}
\ge \frac{q(\theta_0)}{\pi(\theta_0)}
= \e_{\theta_0}.
\]
An application of Proposition
\ref{prop:exactwasserstein} completes the proof.
\end{proof}

\subsection{Application: Bayesian generalized linear models}

We consider Bayesian Poisson and negative-binomial regression for count response regression and Bayesian logistic and probit regression for binary response regression (e.g. logistic and probit).
Suppose there are $n$ discrete-valued responses $Y_i$
  with features $X_i \in \R^d$, and a parameter $\beta \in \R^d$. 
For Poisson regression, assume the $Y_i$'s are conditionally independent with
\[
Y_i \big| X_i, \beta \sim \text{Poisson}\left( \exp\left( \beta^T X_i \right) \right).
\] 
Similarly, for negative-binomial regression, if $\xi \in (0, +\infty)$, assume
\[
Y_i \big| X_i, \beta \sim \text{Negative-Binomial}\left( \xi, \left( 1 + \exp\left( -\beta^T X_i \right) \right)^{-1} \right).
\]
For binary response regression, if $S : \R \to (0, 1)$, assume 
\[
Y_{i} | X_{i}, \beta \sim \text{Bernoulli} \left( S\left( \beta^T X_i \right)\right).
\]
For logistic regression, we will consider $S(x) = \left( 1 + \exp\left( x \right) \right)^{-1}$ and for probit regression, we will consider $S(x)$ to be the cumulative distribution function of a standard Gaussian random variable.
Suppose $\beta \sim \text{N}_{d}(0, \alpha^{-1} C)$ where
$\alpha \in (0, +\infty)$ and $C \in \R^{d \times d}$ is a symmetric,
positive-definite matrix. Both $\alpha$ and $C$ are assumed to be known.
Define the vector $Y = \left( Y_1, \ldots, Y_n \right)^T$ and the matrix $X = \left( X_1, \ldots, X_n \right)^T$. 
Let $\Pi( \cdot | X, Y)$ denote the posterior with density $\pi( \cdot | X, Y)$. 
If $\l_n$ denotes the negative log-likelihood for each model, the posterior density
is characterized by
\[
\pi( \beta | X, Y)
= Z_{\Pi(\cdot  | X, Y))}^{-1} \exp\left(-\l_n(\beta) - \frac{\alpha}{2}
    \beta^T C^{-1} \beta \right).
\]
Observe that the function $\l_n$ is convex in all four models we consider.
Let $\beta^*$ denote the unique maximum of $\pi(\cdot | X, Y)$.
For the MHI
algorithm, we use a $\text{N}_{d}(\beta^*, \alpha^{-1} C)$ proposal
distribution, and Proposition~\ref{theorem:stronglyconvexexactwasserstein} immediately yields the following for each posterior. 

\begin{corollary}\label{corollary:logisticexactwasserstein}
We have
\[
\W_{\rho}\left( P^t(\beta^*, \cdot), \Pi(\cdot | X, Y) \right)
= \left( 1 - \e_{\beta^*} \right)^t \int \rho(\beta, \beta^*) d\Pi\left( \beta | X, Y \right)
\]
where $\e_{\beta^*} = \exp(\l_n(\beta^*) + \frac{\alpha}{2}
    {\beta^*}^T C^{-1} \beta^*) Z_{\Pi(\cdot | X, Y)} \left( (2 \pi)^{d/2} \det\left( \alpha^{-1} C \right)^{1/2} \right)^{-1}$.
\end{corollary}

\subsection{Convergence complexity analysis in binary response regression}

Our goal now is to obtain an upper bound on the rate of convergence established in Corollary \ref{corollary:logisticexactwasserstein}
in high dimensions for binary response regression.  In this context, it is natural to treat the
$(Y_i, X_i)_{i = 1}^n$ as stochastic; each time the sample size increases, the additional observation is randomly generated.  Specifically, we  will assume that $(Y_i, X_i)_{i = 1}^n$ are independent with
  $Y_{i} | X_{i}, \beta \sim \text{Bernoulli} \left( S\left( \beta^T X_i \right)\right)$ and $X_i \sim N_{d}(0, \sigma^2 n^{-1} I_d)$ with
  $\sigma^2 \in (0, +\infty)$.
Similar scaling assumptions on the data are used for high-dimensional
maximum-likelihood theory in logistic regression \citep{Sur2019}.  We
will also assume the limit of the trace of the covariance matrix used
in our prior is finite, that is, $tr(C) \to s_0 \in (0, +\infty)$ as $d \to +\infty$. Note that this is a necessary condition that the trace of the covariance is finite for Gaussian distributions to exist in an infinite-dimensional Hilbert space \citep{Bogochev2012}.

\begin{theorem}
\label{theorem:asympbound} 
Suppose that the following conditions hold. 

\begin{enumerate}
\item The negative log-likelihood $\l_n$ is a twice continuously differentiable convex function.

\item There is a universal constant $r_0 \in (0, +\infty)$ such that the largest eigenvalue of the Hessian of the negative log-likelihood  $H_{\l_n}$ satisfies for every $\beta \in \R^d$,
\[
\lambda_{max} \left( H_{\l_n}(\beta) \right) \le r_0 \lambda_{max}\left( X^T X \right).
\]
\end{enumerate}
\noindent Let
  $a_0 =  r_0 (1 + \gamma^{1/2})^2 \sigma^2 s_0 / (2 \alpha)$. If
  $d, n \to +\infty$ in such a way that $d/n \to \gamma \in (0, +\infty)$,
  then, almost surely
\[
\limsup_{d/n \to \gamma} 
\W_{\rho}\left( P^t(\beta^*, \cdot), \Pi( \cdot | X, Y) \right)
\le M_0 (1 - \exp\left( -a_0 \right))^t 
\]
where $M_0 = \limsup_{d/n \to \gamma} \int \rho(\beta, \beta^*) d\Pi( \beta |
X, Y)$.
\end{theorem}

\begin{proof}
  Under our assumption, we may write the matrix
  $X = n^{-1/2} G$ where $G$ is a matrix with i.i.d Gaussian
  entries with mean $0$ and variance $\sigma^2$. Denote the
  largest eigenvalue of the matrix $X^T X$ by $\lambda_{max}(X^T X)$.
  Therefore, as $d, n \to \infty$ in such a way that
  $d/n \to \gamma \in (0, +\infty)$,
\[
\lambda_{max}\left( X^T X \right) 
= \lambda_{max}\left( \frac{1}{n} G^T G \right) 
= \frac{1}{n} \sup_{v \in \R^d, \norm{v}_2 = 1} \norm{G^T G v}_2
\to (1 + \gamma^{1/2})^2 \sigma^2 
\]
almost surely \cite[][Theorem 1]{Geman1980}.

Define the function $f : \R^d \to \R$ by
$f(\beta) = \l_n(\beta) + \alpha / 2 \beta^T C^{-1} \beta$ where
$\l_n$ is the negative log-likelihood loss function and define $Z_Q = (2 \pi)^{d/2} \det\left( \alpha^{-1} C \right)^{1/2}$. We will first
lower bound the quantity
$\exp(f(\beta^*)) Z_{\Pi( \cdot | X, Y)} / Z_{Q}$. 
We have that for any $\beta \in \R^d$ and any $v \in \R^d$,
\[
v^T H_{\l_n}(\beta) v 
\le r_0 \lambda_{max}(X^T X) \norm{v}_2^2.
\]
This implies that for any $\beta \in \R^d$ and any $v \in \R^d$, the Hessian of $f$, denoted by $H_{f}$, satisfies
\[
v^T H_{f}(\beta) v 
\le v^T \left( r_0 \lambda_{max}(X^T X) I_d + \alpha C^{-1} \right) v.
\]
Since the function $\l_n$ is twice continuously differentiable, then $f$ is also twice continuously differentiable.
Since both the gradient $\nabla f$ and Hessian $H_{f}$ are continuous and $\nabla f(\beta^*) = 0$, we use a Taylor expansion to obtain the upper bound
\begin{align*}
f(\beta)  
&= f(\beta^*) + \int_0^1 \int_0^t (\beta - \beta^*)^T H_{f}(\beta^* + s (\beta - \beta^*)) (\beta - \beta^*) ds dt 
\\
&\le f(\beta^*) + \frac{1}{2} (\beta - \beta^*)^T \left(
     r_0 \lambda_{max}(X^T X) I_d + \alpha C^{-1} \right) (\beta - \beta^*). 
\end{align*}
We then have a lower bound on the normalizing constant of the target posterior
\[
Z_{\Pi(\cdot | X, Y)}
= \int_{\R^d} \exp\left( -f(\beta) \right) d\beta
\ge \frac{\exp\left( -f(\beta^*) \right) (2 \pi)^{d/2}}{\det\left( r_0 \lambda_{max}(X^T X) I_d + \alpha C^{-1} \right)^{1/2}}.
\]
This in turn yields a lower bound on the ratio
\begin{align}
\frac{Z_{\Pi(\cdot | X, Y)}}{Z_Q} \exp\left( f(\beta^*) \right)
\ge \frac{\det\left( \alpha C^{-1} \right)^{1/2}}{\det\left(
  r_0 \lambda_{max}(X^T X) I_d  + \alpha C^{-1} \right)^{1/2}}. \label{eq:ratio}
\end{align}
The matrix $C$ is symmetric and positive-definite and so its
eigenvalues exist and are positive.  Let
$\left( \lambda_i(C) \right)_{i = 1}^d$ be the eigenvalues of $C$.  It
is readily verified that the eigenvalues of the matrix
$r_0 \lambda_{max}(X^T X) I_d + \alpha C^{-1}$ exist and are
$\left( r_0 \lambda_{max}(X^T X) + \frac{\alpha}{\lambda_i(C)}
\right)_{i = 1}^d$. Then
\begin{align}
&\frac{\det\left( \alpha C^{-1} \right)}{\det\left( r_0 \lambda_{max}(X^T X) I_d  + \alpha C^{-1} \right)}
\\
&= \frac{\prod_{i = 1}^d \frac{\alpha}{\lambda_i(C)}}{ \prod_{i = 1}^d \left( r_0 \lambda_{max}\left( X^T X \right) + \frac{\alpha}{\lambda_i(C)} \right)} \nonumber
\\
&= \prod_{i = 1}^d \frac{\frac{\alpha}{\lambda_i(C)}}{ r_0 \lambda_{max}\left( X^T X \right) + \frac{\alpha}{\lambda_i(C)}} \nonumber
\\
&= \prod_{i = 1}^d \frac{1}{\frac{r_0}{\alpha} \lambda_{max}\left( X^T X \right) \lambda_i(C) + 1} \nonumber
\\
&= \exp\left( -\sum_{i = 1}^d \log\left( \frac{r_0}{\alpha} \lambda_{max}\left( X^T X \right)\lambda_i(C) + 1 \right) \right). \label{eq:1}
\end{align}

We have the basic inequality $\log(x + 1) \le x$ for any $x \in [0, +\infty)$.
Since the eigenvalues of $C$ are positive and $\lambda_{max}\left( X^T X \right)$ is nonnegative, we have the upper bound
\begin{align}
\sum_{i = 1}^d \log\left( \frac{r_0}{\alpha} \lambda_{max}\left( X^T X \right)\lambda_i(C) + 1 \right)
\le \frac{r_0}{\alpha} \lambda_{max}\left( X^T X \right) \sum_{i = 1}^d \lambda_i(C) \label{eq:3}.
\end{align}
This yields a lower bound on \eqref{eq:1}.  Define the doubly-indexed
sequence $(a_{d, n})$ by
  \[
    a_{d, n} =  \frac{r_0}{2\alpha} \lambda_{max}\left( X^T X
  \right) \sum_{i = 1}^d \lambda_i(C).
\]
We have then shown that
\begin{align}
\frac{Z_{\Pi(\cdot | X, Y)}}{Z_Q} \exp\left( f(\beta^*) \right)
\ge \exp\left( -a_{d, n} \right).
\end{align}

By our assumption, $tr(C) \to s_0$ as $d \to \infty$.
That is to say that 
\[
\lim_{d \to +\infty} \sum_{i = 1}^d \lambda_i(C) = s_0.
\]
Then as $d, n \to \infty$ in such a way that $d / n \to \gamma \in (0, +\infty)$, by continuity
\[
a_{d, n}
\to \frac{r_0}{2 \alpha} (1 + \gamma^{1/2})^2 \sigma^2 s_0
\]
almost surely.
This implies using continuity that almost surely,
\begin{align*}
\lim_{\substack{d, n \to \infty 
\\ \frac{d}{n} \to \gamma}} (1 - \exp\left( -a_{n, d} \right))^t
&= (1 - \exp\left( -a_0 \right))^t.
\end{align*}

By Corollary
\ref{corollary:logisticexactwasserstein}, we have the upper bound on
the Wasserstein distance for each $d$ and each $n$:
\begin{align*}
&\W_{\rho}\left( P^t(\beta^*, \cdot), \Pi( \cdot | X, Y) \right)
\\
&= \left( 1 -  \exp(f(\beta^*)) \frac{Z_{\Pi( \cdot | X, Y)}}{Z_Q} \right)^t \int_{\R^d} \rho(\beta, \beta^*) d\Pi( \beta | X, Y)
\\
&\le (1 - \exp\left( -a_{n, d} \right))^t \int_{\R^d} \rho(\beta, \beta^*) d\Pi( \beta | X, Y).
\end{align*}
Suppose that $\limsup_{\substack{d, n \to \infty 
\\ \frac{d}{n} \to \gamma}} \int_{\R^d} \rho(\beta, \beta^*) d\Pi( \beta | X, Y) < \infty$.
Using properties of the limit superior,
\begin{align*} 
&\limsup_{\substack{d, n \to \infty 
\\ \frac{d}{n} \to \gamma}} \W_{\rho}\left( P^t(\beta^*, \cdot), \Pi( \cdot | X, Y) \right)
\\
&\le \limsup_{\substack{d, n \to \infty 
\\ \frac{d}{n} \to \gamma}} (1 - \exp\left( -a_{n, d} \right))^t \limsup_{\substack{d, n \to \infty 
\\ \frac{d}{n} \to \gamma}} \int_{\R^d} \rho(\beta, \beta^*) d\Pi( \beta | X, Y)
\\
&= \lim_{\substack{d, n \to \infty 
\\ \frac{d}{n} \to \gamma}} (1 - \exp\left( -a_{n, d} \right))^t \limsup_{\substack{d, n \to \infty 
\\ \frac{d}{n} \to \gamma}} \int_{\R^d} \rho(\beta, \beta^*) d\Pi( \beta | X, Y)
\\
&= (1 - \exp\left( -a_0 \right))^t \limsup_{\substack{d, n \to \infty 
\\ \frac{d}{n} \to \gamma}} \int_{\R^d} \rho(\beta, \beta^*) d\Pi( \beta | X, Y).
\end{align*}

The other case when $\limsup_{\substack{d, n \to \infty 
\\ \frac{d}{n} \to \gamma}} \int_{\R^d} \rho(\beta, \beta^*) d\Pi( \beta | X, Y) = +\infty$ is trivial.
\end{proof}

Theorem~\ref{theorem:asympbound} applies to both Bayesian logistic and probit regression.
For logistic regression, $\l_n$ is a twice continuously differentiable convex function and we may choose $r_0 = 4^{-1}$.
Similarly for probit regression, $\l_n$ is also a twice continuously differentiable convex function and we may choose $r_0 = 1$ \citep{Demidenko2001}.

In Figure \ref{figure:logisticrate}, we plot $(1 - \exp\left( -a_0 \right))^t$,
the limiting decrease in
the Wasserstein distance according to our upper bound,
with varying values of the limiting ratio $\gamma$ with the other remaining values in $a_0$ fixed.  
We observe that as this ratio increases, the number of iterations needed to approximately converge may still increase rather rapidly.

\begin{figure}[ht!]
\centering
\includegraphics[width=.6\linewidth]{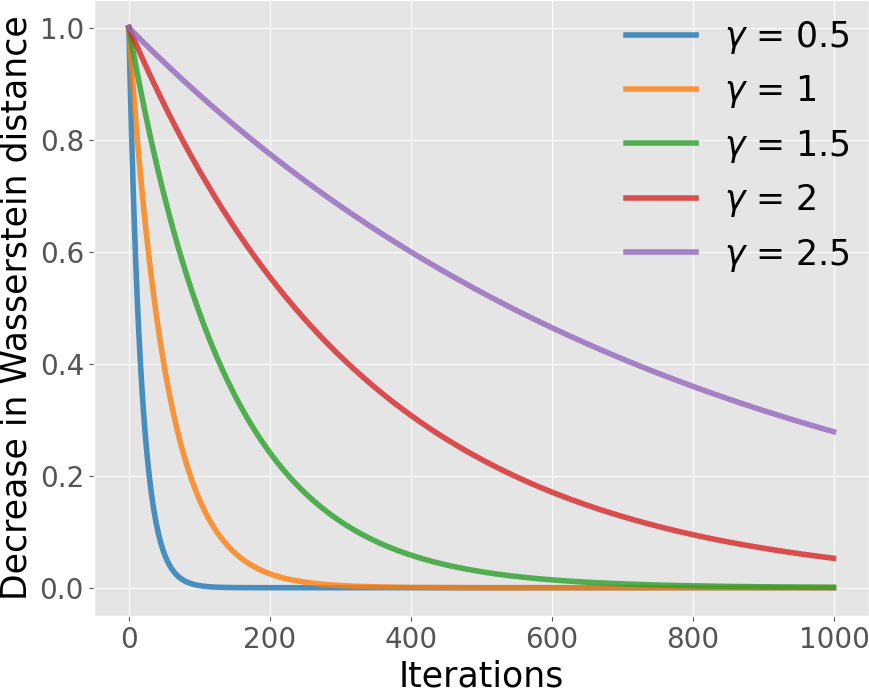}
\caption[Limiting decrease in the Wasserstein distance]{The limiting decrease in the Wasserstein distance using
  different values of $\gamma$, the limiting ratio of the dimension and
  sample size, versus the number of
  iterations.}
\label{figure:logisticrate} 
\end{figure}

\section{Final remarks}\label{section:conclusion}

We studied the exact convergence behavior of the MHI sampler across general Wasserstein distances.
We showed upper and lower bounds on the worst-case convergence rate for Wasserstein distances weaker than the total variation distance.
We showed the exact convergence rate at every initialization for Wasserstein distances weaker than the total variation distance is the same and matches that of the total variation convergence rate \citep{Wang2022}.
When starting at a certain point, we gave exact convergence expressions.
By centering an independent proposal, we directly imbue the Markov chain with a strong attraction to a set where the target distribution has high probability. We showed this technique can provide uniform control over acceptance probability yielding exact convergence rates in Bayesian quantile regression.
The centered MHI sampler turns out to have many applications for posteriors that arise in Bayesian generalized linear models where more sophisticated proposals are often used. With additional assumptions on the data and prior, we also showed this exact convergence rate may be upper bounded when sampling high-dimensional posteriors in Bayesian binary response regression. 

\bibliographystyle{plain}
\bibliography{mhis.bib}

\appendix

\section{Proof of Theorem~\ref{thm:sharp_rate}}
\label{proof:thm:sharp_rate}

The proof will proceed by establishing the upper and lower bounds separately in Lemmas~\ref{lem:MHIS_UB} and~\ref{lem:MHIS_LB}, respectively. This is done largely because the conditions for the upper bound are weaker than those for the lower bound. 

The following definitions will be used in the proofs of Lemmas~\ref{lem:MHIS_UB} and~\ref{lem:MHIS_LB}. First, for $\theta \in \Theta$, real-valued measurable functions $f$, and a Markov kernel $K$, we will use the notation $K^{t} f(\theta) = \int f dK^t(\theta, \cdot) = \int f(\theta') K^t(\theta, d\theta')$ and $K^{0} f(\theta) = f(\theta)$.  Second, recall that for functions $\phi : \R^d \to \R$,
\[
\norm{\phi}_{\text{Lip}(\rho)} = \sup_{x, y, x \not= y} \{ |\phi(x) - \phi(y)| / \rho(x, y)\}.
\]

\begin{lemma}
\label{lem:MHIS_UB}
Let $\e^* = \inf_{\theta \in \Theta} \{ q(\theta)/\pi(\theta) \}$. 
Then
\begin{align*}
\sup_{\theta \in \Theta} \W_\rho(P^t(\theta, \cdot), \Pi)
\le (1 - \e^*)^t \sup_{\theta \in \Theta} \int \rho(\theta, \cdot) d\Pi.
\end{align*}
\end{lemma}

\begin{proof}
Let $\theta \in \Theta$ and let $\phi$ satisfy $\norm{\phi}_{\text{Lip}(\rho)} \le 1$.
The existence of $\e^*$ implies the minorization condition 
$ P(\theta, \cdot) \ge \e^* \Pi(\cdot)$ \cite[][Corollary 4]{Tierney1994} which, in turn, ensures the residual kernel  $R(\theta, \cdot) =  [ P(\theta, \cdot) - \e^* \Pi(\cdot)] / (1 - \e^*)$ is a Markov kernel with invariant distribution $\Pi$.
It then follows that
\begin{align*}
\int \phi dP^t(\theta, \cdot) - \int \phi d\Pi
&= (1 - \e^*) \left[ \int R\phi dP^{t - 1}(\theta, \cdot) - \int \phi d\Pi \right]
\\
&= (1 - \e^*) \left[ \int R\phi dP^{t - 1}(\theta, \cdot) - \int R\phi d\Pi \right]
\\
&\cdots
\\
&= (1 - \e^*)^t \left[ \int \phi dR^t(\theta, \cdot) - \int \phi d\Pi \right].
\end{align*}
Since $\phi$ is Lipschitz with respect to $\rho$, we then have
\begin{align*}
\left| \int \phi dR^t(\theta, \cdot) - \int \phi d\Pi \right|
&= \left| \int \int \left[ \phi(\theta') - \phi(\w) \right] d\Pi(\w) dR^t(\theta, \theta') \right|
\\
&\le \int \int \rho(\theta', \w) d\Pi(\w) dR^t(\theta, \theta')
\\
&\le \sup_{\theta' \in \Theta} \int \rho(\theta', \cdot) d\Pi.
\end{align*}
Taking the supremum with respect to $\phi$ and using the Kantorovich-Rubinstein theorem \citep[Theorem 1.14]{Villani2003},
\begin{align*}
\sup_{\theta \in \Theta} \W_\rho\left( P^t(\theta, \cdot), \Pi \right)
&= \sup_{\theta \in \Theta}
\sup_{\norm{\phi}_{\text{Lip}(\rho)} \le 1} \left[ \int \phi dP^t(\theta, \cdot) - \int \phi d\Pi \right]
\\
&\le (1 - \e^*)^t \sup_{\theta \in \Theta} \int \rho(\theta, \cdot) d\Pi.
\end{align*}
\end{proof}

We now turn our attention to establishing the lower bound. 

\begin{lemma}
\label{lem:MHIS_LB}
Let $\e^* = \inf_{\theta \in \Theta} \{ q(\theta)/\pi(\theta) \}$. Suppose $q$ is lower semicontinuous and $\pi$ is upper semicontinuous on $\Theta$.
Suppose $\Theta$ can be expressed as a countable union of compact sets.
If $\rho(\cdot, \cdot) \le 1$, then
\begin{align*}
\sup_{\theta \in \Theta} \W_\rho(P^t(\theta, \cdot), \Pi) \ge (1 - \e^*)^t \inf_{\theta \in \Theta} \int \rho(\cdot, \theta) d\Pi .
\end{align*}
\end{lemma} 

\begin{proof}
Since $\Theta$ can be expressed as a countable union of compact sets, there is a sequence of compact sets $B_n \subseteq B_{n + 1} \subseteq \Theta$ increasing to $\Theta = \cup_{n = 1}^{\infty} B_n$.
We can assume $\Pi(B_n) > 0$ or else we can take $n$ large enough so this holds.
Since $\pi, q > 0$ and $\pi$ is upper semicontinuous on $\Theta$, then $q/\pi$ is lower semicontinuous on $\Theta$.
By Lemma~\ref{lemma:seqinf}, we have that $\inf_{\theta \in B_n} \{ q(\theta)/\pi(\theta) \}$ is monotonically non-increasing to $\e^* = \inf_{\theta \in \Theta} \{ q(\theta)/\pi(\theta) \}$.
Since we have assumed lower semicontinuity, the $\inf_{\theta \in K} \{ q(\theta)/\pi(\theta) \}$ is attained over any compact set $K \subseteq \Theta$.
Then define the sequence 
\begin{align}
\theta^*_n = \text{argmin}_{\theta \in B_n} \{ q(\theta)/\pi(\theta) \}. \label{eq:theta_star_n}
\end{align}
We can then define the sequence 
\[
\e_{\theta^*_n} = \inf_{\theta \in B_n} \{ q(\theta)/\pi(\theta) \} = q(\theta^*_n)/\pi(\theta^*_n)
\] 
and this is monotonically non-increasing to $\e^*$.

Define $P_n$ to be the Metropolis-Hastings independence kernel with independent proposal $Q$ with density $q$ and target distribution $\Pi(\cdot | B_n)$ with density $\pi(\cdot | B_n) = \pi(\cdot) I_{B_n}(\cdot) / \Pi(B_n)$.
By construction, $\Pi(B_n) > 0$ and this is well-defined.
The key part of the proof is that if we start at any $\theta_n \in B_n$, this kernel $P_n$ and the kernel $P$ only disagree outside of $B_n$.
For $\theta_n \in B_n$, we have $\pi(\theta_n) > 0$, $I_{B_n}(\theta_n) = 1$, and since $\Theta \equiv \text{supp}(q)$ by assumption, then $q(\theta_n) > 0$. Also, if $y \in B_n^c \cap \Theta$, then $\min\left\{ \frac{\pi(y)I_{B_n}(y) q(\theta_n)}{\pi(\theta_n) q(y)}, 1 \right\} = 0$.
Let $M_1(\R^d)$ be the set of measurable functions $\phi : \R^d \to \R$ with $\sup_{x \in \R^d} |\phi(x)| \le 1$.
Therefore, for any $\theta_n \in B_n$ and any function $\phi \in M_1(\R^d)$,
\begin{align*}
&\int_{\R^d} \phi dP_n(\theta_n, \cdot)
= \int_{B_n} \phi(y) \min\left\{ \frac{\pi(y) q(\theta_n)}{\pi(\theta_n) q(y)}, 1 \right\} q(y) dy
\\
&\hspace{.4cm}+ \phi(\theta_n) \left( 1 - \int_{B_n} \min\left\{ \frac{\pi(y) q(\theta_n)}{\pi(\theta_n) q(y)}, 1 \right\} q(y) dy \right).
\end{align*}

Let $\e \in (0, 1 - \e^*)$.
Since $Q$ and $\Pi$ are probability measures, we may then choose $n_{\e}$ sufficiently large such that for all $n \ge n_{\e}$,
\[
2 \max\left\{ \Pi(B_n^c), Q(B_n^c) \right\}
\le \e/2.
\]
We then have
\begin{align}
&\sup_{\theta_n \in B_n} \sup_{\phi \in M_1(\R^d)}  \left| \int_{\R^d} \phi dP_n(\theta_n, \cdot) - \int_{\R^d} \phi dP(\theta_n, \cdot) \right| \nonumber
\\
&= \sup_{\theta_n \in B_n} \sup_{\phi \in M_1(\R^d)}  \Bigg| \int_{B_n^c \cap \Theta} \phi(y) \min\left\{ \frac{\pi(y) q(\theta_n)}{\pi(\theta_n) q(y)}, 1 \right\} q(y) dy \nonumber
\\
&\hspace{3.3cm}+ \phi(\theta_n) \int_{B_n^c \cap \Theta} \min\left\{ \frac{\pi(y) q(\theta_n)}{\pi(\theta_n) q(y)}, 1 \right\} q(y) dy \Bigg| \nonumber
\\
&\le 2 \int_{B_n^c} q(y) dy \nonumber
\\
&\le \e/2. \label{eq:conv_result1}
\end{align}
Similarly,
\begin{align}
&\sup_{\phi \in M_1(\R^d)} \left| \int_{\R^d} \phi d\Pi(\cdot | B_n) - \int_{\R^d} \phi d\Pi \right| \nonumber
\\
&= \sup_{\phi \in M_1(\R^d)} \left| \int_{\R^d} \phi \left( 1 - \Pi(B_n) \right) d\Pi(\cdot | B_n) - \int_{\R^d} \phi d\Pi(\cdot | B_n^c) \Pi(B_n^c) \right| \nonumber
\\
&= \Pi(B_n^c) \sup_{\phi \in M_1(\R^d)} \left| \int_{\R^d} \phi d\Pi(\cdot | B_n) - \int_{\R^d} \phi d\Pi(\cdot | B_n^c) \right| \nonumber
\\
&\le 2 \Pi(B_n^c) \nonumber
\\
&\le \e/2. \label{eq:conv_result2}
\end{align}

With $\theta^*_n$ as in \eqref{eq:theta_star_n}, let $\psi_n(\cdot) = -\rho(\cdot, \theta_n^*)$. 
Then for any $x, y \in \R^d$, 
\begin{align}
| \psi_n(x) - \psi_n(y) | \le \rho(x, y)
\label{eq:rho_lipschitz}
\end{align} 
and $\psi_n \in M_1(\R^d)$.
Since $\Pi$ is invariant for the kernel $P$, 
\begin{align}
\int_{\R^d} \psi_n dP^t(\theta^*_{n}, \cdot) - \int_{\R^d} \psi_n d\Pi
&= \int_{\R^d} P^{t-1} \psi_n(\cdot) dP(\theta^*_{n}, \cdot) - \int_{\R^d} P^{t-1} \psi_n(\cdot) d\Pi(\cdot). \label{eq:P_identity}
\end{align}
Now for any integer $s$ with $1 \le s \le t$, the function $P^{s} \psi_n \in M_1(\R^d)$ since $P$ is a Markov kernel. 
Since $\theta^*_n \in B_n$ and $\pi(\theta^*_n) > 0$, using \eqref{eq:conv_result1}, \eqref{eq:conv_result2}, and \eqref{eq:P_identity},
\begin{align}
&\int_{\R^d} \psi_n dP^t(\theta^*_{n}, \cdot) - \int_{\R^d} \psi_n d\Pi \notag
\\
&\ge \int_{\R^d} P^{t-1} \psi_n dP_{n}(\theta^*_{n}, \cdot) - \int_{\R^d} P^{t-1} \psi_n d\Pi(\cdot | B_n) - \e. \label{eq:lb_eps}
\end{align}
By construction of $\theta^*_n$ in \eqref{eq:theta_star_n}, we have 
\begin{align*}
\inf_{\theta \in B_n} \{ q(\theta) / \pi(\theta | B_n) \}
&= \Pi(B_n) \inf_{\theta \in B_n} \{ q(\theta) / \pi(\theta) \}
\\
&= \Pi(B_n) q(\theta_n^*) / \pi(\theta_n^*) 
\\
&= \e_{\theta^*_n} \Pi(B_n)
\\
&= q(\theta_n^*) / \pi(\theta_n^* | B_n).
\end{align*}
For measurable $A \subset \R^d$ \citep[Remark 1, Theorem 2]{Wang2022}, we then have the identity
\begin{align}
P_{n}(\theta^*_{n}, A)
= \e_{\theta^*_n} \Pi(B_n) \Pi(A|B_n)
+ \left( 1 - \e_{\theta^*_n} \Pi(B_n) \right)\delta_{\theta^*_{n}}(A). \label{eq:P_n_identity}
\end{align}
Since $P^{t-1} \psi_n$ is a bounded measurable function, \eqref{eq:P_n_identity} gives the identity:
\begin{align}
&\int_{\R^d} P^{t-1} \psi_n(\cdot) dP_{n}(\theta^*_{n}, \cdot) - \int_{\R^d} P^{t-1} \psi_n(\cdot) d\Pi(\cdot | B_n) \notag
\\
&= (1 - \e_{\theta^*_n} \Pi(B_n)) \left( P^{t-1} \psi_n(\theta^*_n) - \int_{\R^d} P^{t-1} \psi_n(\cdot) d\Pi(\cdot | B_n) \right). \label{eq:identityexact}
\end{align}
Using \eqref{eq:lb_eps} in the first inequality, \eqref{eq:identityexact} in the second inequality, \eqref{eq:conv_result2} in the third inequality, and using the invariance of $\Pi$ for the Markov kernel $P$ in the last inequality,
\begin{align*}
&\int_{\R^d} P^{t-1} \psi_n P(\theta^*_{n}, \cdot) - \int_{\R^d} P^{t-1} \psi_n d\Pi
\\
&\ge \int_{\R^d} P^{t-1} \psi_n dP_{n}(\theta^*_{n}, \cdot) - \int_{\R^d} P^{t-1} \psi_n d\Pi(\cdot | B_n) - \e.
\\
&\ge (1 - \e_{\theta^*_n} \Pi(B_n)) \left( P^{t-1} \psi_n(\theta^*_n) - \int_{\R^d} P^{t-1} \psi_n d\Pi(\cdot | B_n) \right) - \e
\\
&\ge (1 - \e_{\theta^*_n} \Pi(B_n)) \left( P^{t-1} \psi_n(\theta^*_n) - \int_{\R^d} P^{t-1} \psi_n d\Pi \right) - 2 \e
\\
&\ge (1 - \e_{\theta^*_n} \Pi(B_n)) \left( \int_{\R^d} P^{t-2} \psi_n dP(\theta^*_{n}, \cdot) - \int_{\R^d} P^{t-2} \psi_n d\Pi \right) - 2\e.
\end{align*}
Applying this inequality recursively and using the definition of $\psi_n$
\begin{align}
&\int_{\R^d} \psi_n dP^t(\theta^*_{n}, \cdot) - \int_{\R^d} \psi_n d\Pi \nonumber
\\
&= \int_{\R^d} P^{t-1} \psi_n dP(\theta^*_{n}, \cdot) - \int_{\R^d} P^{t-1} \psi_n d\Pi \nonumber
\\
&\ge (1 - \e_{\theta^*_n} \Pi(B_n))^t \left( \psi_n(\theta^*_n) - \int_{\R^d} \psi_n d\Pi \right) - 2 \e \sum_{s = 0}^{t-1} (1 - \e_{\theta^*_n} \Pi(B_n))^s \nonumber
\\
&= (1 - \e_{\theta^*_n} \Pi(B_n))^t \int_{\R^d} \rho(\theta, \theta_n^*) d\Pi - 2 \e \sum_{s = 0}^{t-1} (1 - \e_{\theta^*_n} \Pi(B_n))^s. \label{eq:recursive_wass_lb}
\end{align}
Since $\Pi(B_n) \to 1$ and $\e_{\theta^*_n} \to \e^*$, we may take $n$ large enough so that
\[
\left| \e_{\theta^*_n} \Pi(B_n) - \e^* \right| \le \e.
\]
For all large enough $n$ and since $\e < 1 - \e^*$, we lower bound \eqref{eq:recursive_wass_lb} to get
\begin{align}
&\int_{\R^d} \psi_n dP^t(\theta^*_{n}, \cdot) - \int_{\R^d} \psi_n d\Pi
\nonumber
\\
&\ge (1 - \e^* - \e)^t \inf_{\theta \in \Theta} \int \rho(\cdot, \theta) d\Pi - 2 \e \sum_{s = 0}^{t-1} (1 - \e^* + \e)^s. \label{eq:final_lb}
\end{align}
Combining \eqref{eq:rho_lipschitz} and \eqref{eq:final_lb}, we lower bound the Wasserstein distance with
\begin{align*}
&\sup_{\theta \in \Theta} \W_\rho(P^t(\theta, \cdot), \Pi)
\\
&\ge \W_\rho(P^t(\theta^*_{n}, \cdot), \Pi)
\\
&\ge (1 - \e^* - \e)^t \inf_{\theta \in \Theta} \int \rho(\cdot, \theta) d\Pi - 2 \e \sum_{s = 0}^{t-1} (1 - \e^* + \e)^s.
\end{align*}
Since this holds for all small $\e$, the proof is complete by taking the limit as $\e \downarrow 0$.
\end{proof}

\section{Technical Lemmas}


\begin{lemma}
\label{lemma:seqinf}
Let $\e^* = \inf_{\theta \in \Theta} \{ q(\theta)/\pi(\theta) \}$.
Suppose there is a sequence of compact sets $B_n \subseteq B_{n + 1} \subseteq \Theta$ increasing to $\Theta = \cup_{n = 1}^{\infty} B_n$.
Define the sequence $\e_n = \inf_{\theta \in B_n} q(\theta)/\pi(\theta)$.
Then $\e_n$ is monotonically non-increasing to its limit $\e^*$.
\end{lemma}

\begin{proof}
By definition of the infimum, $\e_n \ge \e_{n + 1}$ and $\e_n \ge \e^*$. Hence the sequence $\e_n$ converges.
Let $\delta \in (0, \infty)$. By the definition of the infimum, we can choose $\theta_\delta \in \Theta$ with $\pi(\theta_\delta) > 0$ such that
\[
q(\theta_\delta)/\pi(\theta_\delta) - \delta \le \e^*.
\]
We can choose $B_{n_\delta}$ such that $\theta_\delta \in B_{n_\delta}$.
Then
\[
\e_{n_\delta} - \delta
\le q(\theta_\delta)/\pi(\theta_\delta) - \delta
\le \e^*.
\]
It follows that for any $n \ge n_\delta$
\[
| \e_{n} - \e^* |
= \e_{n} - \e^*
\le \e_{n_\delta} - \e^*
\le \delta.
\]
Therefore, $\lim_n \e_{n} = \e^*$.
\end{proof}

\begin{lemma}
\label{lemma:subgradient_inequality}
Let $C \in \R^{d \times d}$ be a positive-definite, symmetric matrix and $\alpha \in (0, \infty)$.
Let $f : \R^d \to \R$ and suppose $\theta \mapsto f(\theta) - \alpha \theta^T C^{-1} \theta / 2$ is convex for all points on $\R^d$.
Then there exists $\theta^* = \text{argmin}_{\theta \in \R^d} f(\theta)$ and
\[
f(\theta)
\ge f(\theta^*) + \frac{\alpha}{2} \left( \theta - \theta^* \right)^T C^{-1} (\theta - \theta^*).
\]
\end{lemma}

\begin{proof}
Since the function
  $f(\theta) - \alpha \theta^T C^{-1} \theta/2$ is convex for
  all points on $\R^d$, it follows that for any $\lambda \in [0, 1]$ and any $(\theta, \theta') \in \R^d \times \R^d$,
\begin{align*}
f(\lambda \theta + (1 - \lambda) \theta')
\le \lambda f(\theta) + (1 - \lambda) f(\theta') - \frac{\alpha}{2} \lambda (1 -
  \lambda) \left( \theta' - \theta \right)^T C^{-1} (\theta' - \theta). 
\end{align*}
Since
$C^{-1}$ is positive-definite, then
$\alpha \lambda (1 - \lambda) \left( \theta' - \theta
\right)^T C^{-1} (\theta' - \theta)/2$ is nonnegative and this implies
that $f$ is a convex function.
It can also be shown that $\lim_{\norm{\theta} \to +\infty} f(\theta) = +\infty$ and since $f$ is lower semicontinuous, then $f$ attains its minimum $\theta^* \in \R^d$.
The right directional derivative
\[
f'(\theta^*; \theta) = \lim_{t \downarrow 0} t^{-1} \left[
  f(\theta^* + t \theta) - f(\theta^*) \right]
\]
exists for all points
$\theta \in \R^d$ \cite[][Theorem 3.1.12]{Nesterov2018}.  For
$\lambda \in (0, 1)$, we have
\begin{align*}
&\frac{1}{ (1 - \lambda)} \frac{1}{\lambda} \left[ f(\theta^* + \lambda \left( \theta - \theta^* \right)) - f(\theta^*) \right]
- \frac{1}{(1 - \lambda)} \left( f(\theta) - f(\theta^*) \right)
\\
&\le - \frac{\alpha}{2} \left( \theta - \theta^* \right)^T C^{-1} (\theta - \theta^*).
\end{align*}
Taking the limit with $\lambda \downarrow 0$, we have that
\[
f'(\theta^*; \theta - \theta^*)
- f(\theta) + f(\theta^*)
\le - \frac{\alpha}{2} \left( \theta - \theta^* \right)^T C^{-1} (\theta - \theta^*).
\]
Since $\theta^*$ is the minimum of $f$, then the right directional derivative satisfies $f'(\theta^*; \theta - \theta^*) \ge 0$ for all $\theta \in \R^d$.
Therefore for all $\theta \in \R^d$,
\[
f(\theta)
\ge f(\theta^*) + \frac{\alpha}{2} \left( \theta - \theta^* \right)^T C^{-1} (\theta - \theta^*).
\]
\end{proof}

\end{document}